\newcommand{\marg}[1]{}
\newcommand{\note}[1]{} 				
\newcommand{\defi}[1]{\textsf{#1}} 				
\newcommand{\Q}{{\mathbb Q}}
\newcommand{\Z}{{\mathbb Z}}
\newcommand{\calM}{{\mathcal M}}
\newcommand{\scrX}{{\mathscr X}}
\def\Q{\mathbb{Q}}
\def\Z{\mathbb{Z}}
\DeclareMathOperator{\im}{im}
\DeclareMathOperator{\Jac}{Jac} 
 \DeclareMathOperator{\Hom}{Hom}
\DeclareMathOperator{\Div}{Div} \DeclareMathOperator{\Pic}{Pic}
 \numberwithin{equation}{subsection}
\newtheorem{lemma}[subsection]{Lemma}
\newtheorem{thmx}{Theorem}
\newtheorem{corox}[thmx]{Corollary}
\theoremstyle{definition}
\newtheorem{definition}[subsection]{Definition}
\newtheorem{question}[subsection]{Question}
\theoremstyle{remark}
\newtheorem{remark}[subsection]{Remark}
\begin{document}
\title[Chip-firing groups of iterated cones]
{Chip-firing groups of iterated cones}
\author{Morgan V. Brown}
\address{Department of Mathematics, University of Miami,
Coral Gables, FL 33146 USA}
\email{mvbrown@math.miami.edu}
\author{Jackson S.~Morrow}
\address{Department of Mathematics and Computer Science, Emory University,
Atlanta, GA 30322 USA}
\email{jmorrow4692@gmail.com}
\subjclass{}
\author{David Zureick-Brown}
\address{Department of Mathematics and Computer Science, Emory University,
Atlanta, GA 30322 USA}
\email{dzb@mathcs.emory.edu}
\subjclass{}

\date{\today}
\thanks{}

\begin{abstract}

Let $\Gamma$ be a finite graph and let $\Gamma_n$ be the ``$n$th cone over $\Gamma$'' (i.e., the join of $\Gamma$ and the complete graph $K_n$). We study the asymptotic structure of the chip-firing group $\Pic^0(\Gamma_n)$.

\end{abstract}

\maketitle

\section{Introduction}
\label{sec:introduction}
The \emph{chip-firing} groups $\Pic^0(\Gamma) \subset \Pic(\Gamma)$ of a finite graph $\Gamma$ are classical objects of combinatorial study. Baker \cite{Baker:specialization} developed the connection between line bundles on a semistable arithmetic curve $\scrX$ and $\Pic^0(\Gamma)$, where $\Gamma$ is the dual graph of the special fiber of $\scrX$, and with various coauthors \cite{BakerN:RR, BakerN:Harmonic} discovered that the cornerstone theorems satisfied by algebraic curves (e.g., Riemann--Roch and Clifford's theorem) admit non-trivial analogous theorems for graphs. 

The technology transfer flows both ways; chip-firing (and variants and tools from tropical geometry) have emerged as a central tool in recent results across several subfields of algebraic/arithmetic geometry and number theory, including the maximal rank conjecture for quadrics \cite{jensenP:tropical-independence-II}, the  Gieseker--Petri theorem \cite{jensenP:tropical-independence-I}, the Brill--Noether theorem \cite{Cools2012}, and the uniform boundedness conjecture \cite{KatzRZB-uniform-bounds}; see \cite{bakerJ:degeneration-survey} for an extensive survey. 

An interest in the \emph{computational} properties of $\Pic^0(\Gamma)$ has recently emerged. Several authors, including \cite{BergetMM:critical-group-line-graph, JacobsonNR:critical-groups-multipartite, Lorenzini:smith-normal-form}, have worked to compute $\Pic^0(\Gamma)$ (or, failing that, $|\Pic^0(\Gamma)|$, which is equal to the number of spanning trees of $\Gamma$ \cite[Theorem~6.2]{BakerS:chip-firing-matrix-tree}) for various families of graphs; we refer the reader to \cite[pg.~1155]{alfaro2012sandpile} for nearly a complete list of authors contributing to this area. 

Our question of interest is the behavior of the chip-firing group of the $n$th cone $\Gamma_n$ over $\Gamma$, where $\Gamma_n$ is defined as the join of $\Gamma$ with the complete graph $K_n$. Recall, the \defi{join} of two graphs $\Gamma_1$ and $\Gamma_2$ is a graph obtained from $\Gamma_1$ and $\Gamma_2$ by joining each vertex of $\Gamma_1$ to all vertices of $\Gamma_2$. 
In \cite{alfaro2012sandpile}, the authors interpret the chip-firing group of the $n$th cone of the Cartesian product of graphs as a function of the chip-firing group of the cone of their factors. As a consequence, they completely describe the chip-firing group of the $n$th cone over the $d$-dimensional hypercube.
\\

Our main theorem concerns the the chip-firing group of the $n$th cone over a fixed graph. 

\begin{thmx}
\label{thm:cone}
Let $\Gamma$ be a graph on $k\geq 1$ vertices. Let $n \geq 1$ be an integer, and let $\Gamma_n$ be the $n$th cone over $\Gamma$ defined above. Then there is a short exact sequence of abelian groups 
\[
0 \to (\mathbb{Z}/(n+k)\mathbb{Z})^{n-1} \to \Pic^0(\Gamma_n) \to H_n \to 0
\]
where the order of $H_n$ is $|P_{\Gamma}(-n)|$ and $P_{\Gamma}(x)$ is the characteristic polynomial of the rational Laplacian operator. 
\end{thmx}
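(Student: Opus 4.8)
The plan is to locate, inside the divisor lattice of $\Gamma_n$, a large sublattice stable under the Laplacian on which the Laplacian acts by the scalar $n+k$, to extract the short exact sequence from the resulting filtration via the snake lemma, and finally to pin down $|H_n|$ by a determinant computation. Write $N=n+k$, let $c_1,\dots,c_n$ be the vertices of the $K_n$--factor and $w_1,\dots,w_k$ those of $\Gamma$, and let $L=L(\Gamma_n)\colon\Z^N\to\Z^N$ be the graph Laplacian, so that $\Pic^0(\Gamma_n)$ is the torsion subgroup of $\coker(L)$. In the block decomposition $\Z^N=\Z^n\oplus\Z^k$ one has
\[
L \;=\; \begin{pmatrix} N I_n - J_n & -J \\ -J^{\mathsf T} & L(\Gamma)+nI_k\end{pmatrix},
\]
with $J$ an all--ones matrix of the evident size. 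The first step is the elementary observation that the rank $n-1$ sublattice $M=\langle\, e_{c_i}-e_{c_{i+1}}:1\le i\le n-1\,\rangle$ is $L$--stable with $L|_M=N\cdot\mathrm{id}_M$: in a difference of two cone basis vectors the $J_n$-- and $J^{\mathsf T}$--contributions cancel, leaving $L(e_{c_i}-e_{c_{i+1}})=N(e_{c_i}-e_{c_{i+1}})$.

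Next I would apply the snake lemma to the endomorphism $L$ of the short exact sequence $0\to M\to\Z^N\to\overline Q\to 0$, where $\overline Q=\Z^N/M\cong\Z^{k+1}$ carries the induced map $\overline L$. The term $L|_M$ contributes $\ker=0$ and $\coker=(\Z/N\Z)^{n-1}$; since $\Gamma_n$ is connected, $\ker L=\Z\cdot\mathbf 1$; and $\overline L$ has rank exactly $k$ because its lower--right block $L(\Gamma)+nI_k$ is positive definite, so $\ker\overline L$ is a saturated rank one sublattice of $\overline Q$. The image of $\mathbf 1$ in $\overline Q$ is $n\,\overline{e}_{c_n}+\sum_{l}\overline{e}_{w_l}$, which is primitive and lies in $\ker\overline L$, hence generates it; therefore $\ker L\to\ker\overline L$ is surjective and the connecting map vanishes. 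The snake lemma then gives $0\to(\Z/N\Z)^{n-1}\to\coker(L)\to\coker(\overline L)\to 0$. Because $\Gamma_n$ is connected, $\coker(L)\cong\Z\oplus\Pic^0(\Gamma_n)$; likewise $\coker(\overline L)\cong\Z\oplus H_n$ with $H_n:=\coker(\overline L)_{\tors}$ (the free rank being $(k+1)-k=1$); and the finite subgroup $(\Z/N\Z)^{n-1}$ must map into the torsion part $\Pic^0(\Gamma_n)$ of $\coker(L)$. Passing to torsion subgroups therefore produces the asserted sequence $0\to(\Z/N\Z)^{n-1}\to\Pic^0(\Gamma_n)\to H_n\to 0$.

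To identify $|H_n|$ it now suffices, by this sequence, to prove $|\Pic^0(\Gamma_n)|=N^{n-1}\,|P_\Gamma(-n)|$. I would use that $|\Pic^0(\Gamma_n)|$ is the number of spanning trees of $\Gamma_n$, hence equals $\det\widetilde L$ for the reduced Laplacian $\widetilde L$ obtained by deleting the row and column of a cone vertex $c_n$; this $\widetilde L$ has upper--left block $N I_{n-1}-J_{n-1}$, lower--right block $L(\Gamma)+nI_k$, and off--diagonal blocks $-J$. Evaluating $\det\widetilde L$ by the Schur complement of the invertible block $L(\Gamma)+nI_k$, and using $(L(\Gamma)+nI_k)^{-1}\mathbf 1_k=\tfrac1n\mathbf 1_k$ to obtain $\mathbf 1_k^{\mathsf T}(L(\Gamma)+nI_k)^{-1}\mathbf 1_k=k/n$, the Schur complement becomes $N I_{n-1}-\tfrac{N}{n}J_{n-1}$, of determinant $N^{n-1}/n$; hence $\det\widetilde L=\tfrac1n N^{n-1}\det(L(\Gamma)+nI_k)$. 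Finally $\det(L(\Gamma)+nI_k)=\prod_i(n+\lambda_i)$ over the eigenvalues $\lambda_i$ of $L(\Gamma)$; one of these is $0$, so the product equals $n\prod_j(n+\mu_j)$ over the nonzero ones $\mu_1,\dots,\mu_{k-1}$, which is $n\,|P_\Gamma(-n)|$ since $P_\Gamma(x)=\prod_j(x-\mu_j)$. Thus $|\Pic^0(\Gamma_n)|=N^{n-1}|P_\Gamma(-n)|$, as required.

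The determinant evaluation in the last paragraph (equivalently, a computation of the full Laplacian spectrum of $\Gamma_n$ by splitting $\R^N$ into the three pairwise orthogonal $L$--stable pieces $M\otimes\R$, the plane spanned by the indicators of the cone and of $\Gamma$, and $\{(0,x):\mathbf 1_k^{\mathsf T}x=0\}$) is the one step I expect to require genuine computation; everything preceding it is formal once $M$ is identified. The point to be careful about is that the snake--lemma sequence is one of cokernels of the \emph{full} Laplacians, so I must verify that $\mathbf 1$ maps onto a \emph{generator} of $\ker\overline L$; if it only mapped onto a proper multiple, a copy of $\Z/N\Z$ would be absorbed into the connecting homomorphism, the free ranks on the two sides would fail to match, and the sequence would not remain short exact after passing to torsion.
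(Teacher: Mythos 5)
Your proof is correct, but it takes a genuinely different route from the paper's. The paper works spectrally over $\Q$: it observes that $B_n=L(\Gamma_n)-L(K_{n+k})$ shifts eigenvalues by $n+k$ on $\Div^0_{\Q}$, writes down an explicit eigenbasis of $\Delta(\Gamma_n)$ (the $k-1$ eigenvectors coming from $\Gamma$, the $n-1$ cone-vertex differences, and one extra vector), gets $|\Pic^0(\Gamma_n)|=(n+k)^{n-1}\prod_i(n+k+\mu_i)$ from the matrix-tree theorem, and then — this is the step where the real work lies — proves via two combinatorial ``conformity'' lemmas that the classes of the cone-vertex differences have exact order $n+k$ and are independent, so that they generate a subgroup isomorphic to $(\Z/(n+k)\Z)^{n-1}$; $H_n$ is then defined as the quotient and its order follows by division. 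You instead argue integrally: you identify the same sublattice $M$ of cone differences, note $L|_M=(n+k)\cdot\mathrm{id}$, and run the snake lemma on $0\to M\to\Z^{n+k}\to\overline Q\to 0$, where the decisive check — that $\mathbf 1$ maps to a \emph{primitive} generator of $\ker\overline L$, so the connecting map vanishes — replaces the paper's conformity lemmas; injectivity of $(\Z/(n+k)\Z)^{n-1}\hookrightarrow\Pic^0(\Gamma_n)$ and the surjection onto $H_n:=(\coker\overline L)_{\tors}$ then come for free, and you finish with a reduced-Laplacian/Schur-complement determinant rather than the full spectrum. What each approach buys: the paper's spectral computation produces all eigenvalues of $\Delta(\Gamma_n)$ (which it reuses for the join theorem), and its conformity lemmas are standalone tools valid for any conforming vertex sets; your argument avoids those lemmas entirely, treats $k=1$ uniformly instead of as a separate case (and incidentally sidesteps the paper's misstatement that $\Pic^0(K_{n+1})\cong(\Z/(n-1)\Z)^{n+1}$), gives an intrinsic description of $H_n$ as $\coker\overline L$ modulo its free part, and correctly flags the one genuinely delicate point, namely that $\mathbf 1$ must hit a generator (not a proper multiple) of $\ker\overline L$. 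The two proofs use the same underlying decomposition of $\Div(\Gamma_n)$, but yours is homological where the paper's is spectral-combinatorial, and both require essentially the same determinant evaluation at the end.
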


In particular, this immediately gives an exact formula for the number of spanning trees of $\Gamma_n$.

\begin{corox}\label{coro:size}
Let $\Gamma$ be a graph on $k\geq 1$ vertices. Let $n \geq 1$ be an integer, and let $\Gamma_n$ be the $n$th cone over $\Gamma$ defined above.  There is a subgroup of $\Pic^0(\Gamma_n)$ isomorphic to $(\mathbb{Z}/(n+k)\mathbb{Z})^{n-1}$, and 
$$|\Pic^0(\Gamma_n)| = (n+k)^{n-1}|P_{\Gamma}(-n)|$$
where $P_{\Gamma}(x)$ is the characteristic polynomial of the rational Laplacian operator. 
\end{corox}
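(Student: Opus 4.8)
The plan is to obtain Corollary~\ref{coro:size} as a purely formal consequence of Theorem~\ref{thm:cone}; no genuinely new argument is needed. The first step is to observe that $\Pic^0(\Gamma_n)$ is a finite abelian group. This is immediate from the short exact sequence of Theorem~\ref{thm:cone}, whose outer terms $(\mathbb{Z}/(n+k)\mathbb{Z})^{n-1}$ and $H_n$ are both finite (the latter has order $|P_\Gamma(-n)|$); alternatively one may appeal to \cite[Theorem~6.2]{BakerS:chip-firing-matrix-tree}. Finiteness is precisely what makes applicable the elementary fact that, in a short exact sequence $0 \to A \to B \to C \to 0$ of finite abelian groups, one has $|B| = |A|\cdot|C|$.

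The second step is then immediate: applying this multiplicativity to
\[
0 \to (\mathbb{Z}/(n+k)\mathbb{Z})^{n-1} \to \Pic^0(\Gamma_n) \to H_n \to 0
\]
gives $|\Pic^0(\Gamma_n)| = (n+k)^{n-1}\,|P_\Gamma(-n)|$, using $|(\mathbb{Z}/(n+k)\mathbb{Z})^{n-1}| = (n+k)^{n-1}$ together with the value $|H_n| = |P_\Gamma(-n)|$ supplied by Theorem~\ref{thm:cone}. For the final assertion, the image of the injective arrow in the sequence is, by definition, a subgroup of $\Pic^0(\Gamma_n)$ isomorphic to $(\mathbb{Z}/(n+k)\mathbb{Z})^{n-1}$.

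I do not anticipate any obstacle here, since all of the substance is already encoded in Theorem~\ref{thm:cone}. One mild point worth recording is that $P_\Gamma(-n)\neq 0$, so that $|P_\Gamma(-n)|\geq 1$ and the formula is non-vacuous; this holds because $n\geq 1$ whereas the eigenvalues of the rational Laplacian, and hence the roots of $P_\Gamma$, are non-negative. As a sanity check one could also note that the right-hand side agrees with the number of spanning trees of $\Gamma_n$ via the matrix--tree theorem, which is automatic once Theorem~\ref{thm:cone} is known.
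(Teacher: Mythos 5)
Your deduction is correct and is essentially the paper's own: Corollary~\ref{coro:size} is treated there as an immediate formal consequence of Theorem~\ref{thm:cone}, exactly via multiplicativity of orders in the short exact sequence together with the injectivity of the first map giving the $(\mathbb{Z}/(n+k)\mathbb{Z})^{n-1}$ subgroup. (The paper additionally notes that the order formula alone also follows from Theorem~\ref{thm:bipartite} by taking $\Gamma_1=\Gamma$, $\Gamma_2=K_n$ and using $P_{K_n}(x)=(n-x)^{n-1}$, but the subgroup assertion still rests on Theorem~\ref{thm:cone}, as in your argument.)
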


\begin{remark}
In the statements of Theorem \ref{thm:cone} and Corollary \ref{coro:size}, the rational Laplacian operator is a linear endomorphism of $\Div^0(\Gamma)\otimes \mathbb{Q}$.  We refer the reader to Section \ref{sec:notation} for more details.
\end{remark}

\begin{remark}
In a previous version of this paper, the authors erroneously claimed that this exact sequence was split for odd values of $n+k$, and conjectured it was split in general. We are very grateful to Gopal Goel for pointing out this error and providing a counter example. If $\Gamma$ is the graph given in Figure \ref{fig:Goelexample}, a computer calculation shows that $\Pic^0(\Gamma_3) \cong \mathbb{Z}/9\mathbb{Z} \oplus \mathbb{Z}/27\mathbb{Z} \oplus (\mathbb{Z}/16\mathbb{Z})^2 \oplus \mathbb{Z}/19\mathbb{Z}$. In particular, the map $(\mathbb{Z}/9\mathbb{Z})^{2} \to  \Pic^0(\Gamma_3)$ cannot be split.

\begin{figure}[h!]
\begin{tikzpicture}[scale=1][line join=bevel]
\coordinate (0) at (-2,0);
\coordinate (1) at (-1,1);
\coordinate (2) at (-1,-1);
\coordinate (3) at (1,1);
\coordinate (4) at (1,-1);
\coordinate (5) at (2,0);

\filldraw [black] (0) circle (3pt)
(1) circle (3pt)
(2) circle (3pt)
(3) circle (3pt)
(4) circle (3pt)
(5) circle (3pt);


\draw [ultra thick] (0)--(1);
\draw [ultra thick] (0)--(2);
\draw [ultra thick] (1)--(2);
\draw [ultra thick] (1)--(3);
\draw [ultra thick] (1)--(4);
\draw [ultra thick] (2)--(3);
\draw [ultra thick] (2)--(4);
\draw [ultra thick] (3)--(4);
\draw [ultra thick] (4)--(5);

\draw [ultra thick] (3)--(5);

\end{tikzpicture} 
\caption{Goel's example: For the third cone $\Gamma_3$, the exact sequence of Theorem \ref{thm:cone} is not split.}
\label{fig:Goelexample}
\end{figure}
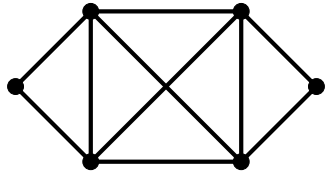

\end{remark}

A more elusive question is the precise structure of the groups $\Pic^0(\Gamma_n)$ and $H_n$. 

\begin{question}
For which $\Gamma$ and $n$, is the short exact sequence in Theorem \ref{thm:cone} split?
  \end{question}

If $\Gamma$ is a tree, then we are able to determine an upper bound on the number generators for the subgroup $H_n$ appearing above. Recall that a \defi{leaf} of a graph is a vertex of degree 1.

\begin{thmx}
\label{thm:tree}
Let $\Gamma$ be a tree with $l+1 \geq 2$ leaves and let $\Gamma_n$ and $H_n$ be as in Theorem \ref{thm:cone}.   Then, $H_n$ can be generated by $l$ elements.
\end{thmx}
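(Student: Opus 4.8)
The plan is to analyze the group $H_n$ via the construction that produces it in the proof of Theorem \ref{thm:cone}. Since $|H_n| = |P_\Gamma(-n)|$ and $P_\Gamma(x)$ is the characteristic polynomial of the rational Laplacian $L$ on $\Div^0(\Gamma) \otimes \mathbb{Q}$, the group $H_n$ should be presented as the cokernel of the map $\Div^0(\Gamma) \to \Div^0(\Gamma)$ given by a matrix congruent (over $\mathbb{Z}$, or at least after the reduction already carried out in the proof of Theorem \ref{thm:cone}) to $L + nI$ acting on the rank-$(k-1)$ lattice $\Div^0(\Gamma)$. So the first step is to pin down, from the proof of Theorem \ref{thm:cone}, an explicit integer matrix $M_n$ whose cokernel is $H_n$; I expect $M_n$ to be (a unimodular transform of) the reduced Laplacian $\tilde L_\Gamma + nI$, i.e. the Laplacian of $\Gamma$ with one row and column deleted, plus $n$ times the identity.

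Next I would use the tree structure. For a tree $\Gamma$, order the vertices so that the leaves come last, or better: root $\Gamma$ at a leaf and do a leaf-stripping / elimination argument. The key combinatorial fact is that in a tree, every non-leaf has a well-behaved position in an elimination ordering, and performing row/column operations on $\tilde L_\Gamma + nI$ corresponding to eliminating non-leaf vertices (Gaussian elimination from the leaves inward) clears out all but $l$ of the rows/columns, because the Laplacian of a tree is "close to" triangular along any leaf-to-root ordering. Concretely: take a vertex $v$ that is adjacent to exactly one non-leaf (such a vertex exists unless $\Gamma$ is a star, which is handled directly), use the diagonal entry $\deg(v) + n$ — which is invertible rationally but we want integrality, so instead we use that the off-diagonal entries are $0$ or $-1$ — to perform integral column operations eliminating the coupling of $v$'s neighbors among the remaining coordinates. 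After stripping all $k - 1 - l$ internal edges this way, we are left with a presentation matrix of size $l \times l$ (plus possibly some diagonal $\mathbb{Z}/(n+k)$-type blocks that have already been split off in Theorem \ref{thm:cone}), hence $H_n$ is generated by $l$ elements. The bound $l$ rather than $l+1$ comes from the fact that we are working in $\Div^0$, which drops the rank by one — exactly one leaf's coordinate is eliminated "for free" by the degree-zero condition.

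The main obstacle will be making the elimination steps \emph{integral} rather than merely rational: the diagonal entries $\deg(v) + n$ are not units in $\mathbb{Z}$, so I cannot naively pivot on them. The fix is to exploit that the columns corresponding to internal tree edges have a very special sparse, $\{0, \pm 1\}$-structure: if $v$ is a leaf attached to internal vertex $w$, the row for $v$ reads $(1+n)$ in position $v$ and $-1$ in position $w$ and zero elsewhere, and one can perform the integral column operation $C_w \mapsto C_w + C_v$-type moves, or pass to the quotient lattice by the sublattice spanned by $e_v$, to reduce the problem to the tree $\Gamma \setminus \{v\}$ with $w$'s diagonal entry modified — tracking that the modification stays within a controlled shape. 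I would set this up as an induction on the number of vertices of the tree: the inductive hypothesis being a precise statement about the Smith-normal-form-relevant shape of the presentation matrix of $H_n$ for any tree, strong enough that the leaf-stripping step goes through. The star case ($l+1$ vertices, $l$ leaves) is the base case and should be a direct computation with the matrix $nI_{l} + (\text{rank-one correction})$, whose cokernel is visibly generated by $\leq l$ elements.
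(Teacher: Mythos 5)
There is a genuine gap, and it starts with your presentation matrix. The group $H_n$ is \emph{not} the cokernel of the reduced Laplacian plus $nI$: already for $\Gamma = K_2$ that matrix is the $1\times 1$ matrix $(n+1)$, with cokernel $\Z/(n+1)\Z$, whereas $|H_n| = |P_{K_2}(-n)| = n+2$. The correct presentation, obtained by writing $H_n$ as the quotient of $\Pic^0(\Gamma_n)$ by the classes $w_i - w_j$ and setting $e_v = \bar v - \bar w_1$, has one generator $e_v$ per vertex $v$ of $\Gamma$ and relations $(\deg_\Gamma(v)+n)\,e_v = \sum_{u\sim v} e_u$ (firing $v$) \emph{together with} $\sum_{v} e_v = 0$ (firing a cone vertex); i.e.\ the presentation matrix is $L(\Gamma)+nI$ augmented by the all-ones column, not a principal submatrix of it. Your ``degree-zero drops the rank by one'' heuristic conflates these, and since the entire argument consists of manipulating this matrix, the error propagates. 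More seriously, the elimination/induction step is not actually carried out, and the specific reduction you propose does not work as stated: quotienting by the sublattice spanned by a leaf generator $e_v$ does not produce the analogous presentation for $\Gamma\setminus\{v\}$ (the degree of $v$'s neighbor and the all-ones relation both change), and after one pivot on a $\pm1$ edge entry the remaining matrix is no longer of the form ``tree Laplacian plus $nI$,'' so the next pivot is no longer on a unit entry. The ``precise strengthened inductive hypothesis'' you defer to is exactly where all the difficulty lies, and it is never formulated, so the proof is not complete.

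For comparison, the paper avoids all Smith-normal-form bookkeeping. It works directly in $\Pic^0(\Gamma_n)$: let $H'$ be the subgroup of $H_n$ generated by the classes $\bar v_j - \bar w_1$ for $l$ of the $l+1$ leaves, and let $S$ be the set of vertices $v$ of $\Gamma$ all of whose neighbor-differences $\bar v - \bar v'$ lie in $H'$. If a vertex $v$ has at least one neighbor in $S$ and at most one neighbor outside $S$, the firing relation at $v$ expresses the single unknown difference $\bar v - \bar\delta$ (which occurs with coefficient $1$) in terms of elements of $H'$, so $v\in S$; a leaf-stripping argument on the induced subgraph of the complement $S^c$ then forces $S^c=\varnothing$, whence $H'=H_n$. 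This is the same ``exploit unit coefficients at edges plus leaf-stripping'' instinct as yours, but implemented as a propagation argument among group elements rather than as integral row/column reduction, which is what makes the integrality issue you worry about disappear. If you want to salvage your matrix approach, you would need to prove a lemma about the shape of the presentation preserved under your elimination; as written, that lemma is missing and its naive form is false.
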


It is possible that $H_n$ may be generated by fewer elements, as in Figure \ref{fig:generators}.  
\\
\begin{figure}[h!]
\begin{tikzpicture}[scale=1.2][line join=bevel]
\coordinate (0) at (-2,0);
\coordinate (1) at (-1,0);
\coordinate (2) at (0,0);
\coordinate (3) at (1,1);
\coordinate (4) at (1,-1);

\filldraw [black] (0) circle (2pt) 
(1) circle (2pt)
(2) circle (2pt)
(3) circle (2pt)
(4) circle (2pt);


\draw [ultra thick] (0)--(1);
\draw [ultra thick] (1)--(2);
\draw [ultra thick] (2)--(3);
\draw [ultra thick] (2)--(4);\end{tikzpicture}
\caption{This tree $\Gamma$ has 3 leaves, but a computer calculation shows that $\Pic^0 (\Gamma_1)$ is isomorphic to $\Z/52\Z$.} \label{fig:generators}
\end{figure}
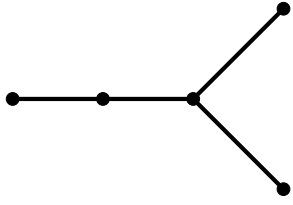

Finally,  we can slightly generalize Corollary \ref{coro:size}; we determine the order of chip-firing group for the join of $l$ graphs. 

\begin{thmx}
\label{thm:bipartite}
Let $\Gamma_1, \ldots, \Gamma_l$ be non-empty graphs with $k_1,\ldots, k_l$ vertices, and let $\Gamma_J$ be the join.
Then, 
$$|\Pic^0(\Gamma_J)| = k^{l-2}\prod_{i\leq l}|P_{\Gamma_i}(k_i-k)|,$$ where $k  = \sum k_i $ is the number of vertices of $\Gamma_J$.
\end{thmx}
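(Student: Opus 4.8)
The plan is to reduce the statement to Kirchhoff's matrix--tree theorem together with a single eigenvalue computation, exactly as in (the first part of) the proof of Theorem~\ref{thm:cone}. Recall from \cite[Theorem~6.2]{BakerS:chip-firing-matrix-tree} that $|\Pic^0(\Gamma)|$ is the number of spanning trees of $\Gamma$; combined with the matrix--tree theorem this gives, for \emph{any} graph $\Gamma$ on $k$ vertices, the identity $|\Pic^0(\Gamma)| = \tfrac1k\,|P_\Gamma(0)|$, where $P_\Gamma(x)$ denotes the characteristic polynomial of the rational Laplacian on $\Div^0(\Gamma)\otimes\Q$ (both sides vanish when $\Gamma$ is disconnected). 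Moreover the graph Laplacian $L(\Gamma)$ on $\Q^{V(\Gamma)}$ always annihilates the all-ones vector and preserves its orthogonal complement $\Div^0(\Gamma)\otimes\Q$, so its characteristic polynomial on $\Q^{V(\Gamma)}$ factors as $x\cdot P_\Gamma(x)$. Thus the theorem will follow once we factor the Laplacian characteristic polynomial of $\Gamma_J$.

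First I would record the block form of $L(\Gamma_J)$: since a vertex of $\Gamma_i$ picks up exactly the $k-k_i$ vertices in the other blocks as new neighbors,
\[
L(\Gamma_J) \;=\; \Bigl(\bigoplus_{i=1}^{l}\bigl(L(\Gamma_i)+(k-k_i)I_{k_i}\bigr)\Bigr)\;+\;\Bigl(\bigoplus_{i=1}^{l}J_{k_i}\Bigr)\;-\;J_k,
\]
where $J_m$ is the $m\times m$ all-ones matrix. Then I would exhibit the $L(\Gamma_J)$-invariant orthogonal decomposition $\Q^{V(\Gamma_J)}=\bigl(\bigoplus_{i}W_i\bigr)\oplus U$, where $W_i:=\Div^0(\Gamma_i)\otimes\Q$ is the space of zero-sum vectors supported on block $i$ and $U:=\langle\mathbf 1_{k_1},\dots,\mathbf 1_{k_l}\rangle$ is spanned by the $l$ block-indicator vectors; the dimensions match because $\sum_i(k_i-1)+l=k$. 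On $W_i$ both $\bigoplus_j J_{k_j}$ and $J_k$ act as zero, so $L(\Gamma_J)$ restricts to $L(\Gamma_i)|_{W_i}+(k-k_i)I$, contributing the factor $P_{\Gamma_i}\!\bigl(x-(k-k_i)\bigr)$ to the characteristic polynomial. A direct computation gives $L(\Gamma_J)\,\mathbf 1_{k_i}=k\,\mathbf 1_{k_i}-k_i\,\mathbf 1_k=k\,\mathbf 1_{k_i}-k_i\sum_j\mathbf 1_{k_j}$, so in the basis $(\mathbf 1_{k_1},\dots,\mathbf 1_{k_l})$ the action on $U$ is the matrix $N$ with $N_{ji}=k\delta_{ji}-k_i$, i.e.\ $N=kI_l-\mathbf 1_l\mathbf k^{\top}$ with $\mathbf k=(k_1,\dots,k_l)^{\top}$; this rank-one perturbation of $kI_l$ has eigenvalues $0$ (eigenvector $\mathbf 1_l$, i.e.\ the global all-ones vector) and $k$ with multiplicity $l-1$, hence characteristic polynomial $x(x-k)^{l-1}$.

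Assembling the three pieces, the characteristic polynomial of $L(\Gamma_J)$ on $\Q^{V(\Gamma_J)}$ is $x(x-k)^{l-1}\prod_{i=1}^{l}P_{\Gamma_i}\!\bigl(x-(k-k_i)\bigr)$; dividing off the factor $x$ coming from the global all-ones vector gives
\[
P_{\Gamma_J}(x)\;=\;(x-k)^{l-1}\prod_{i=1}^{l}P_{\Gamma_i}\!\bigl(x-(k-k_i)\bigr).
\]
Evaluating at $x=0$ and substituting into $|\Pic^0(\Gamma_J)|=\tfrac1k\,|P_{\Gamma_J}(0)|$ yields $|\Pic^0(\Gamma_J)|=\tfrac1k\,k^{l-1}\prod_i|P_{\Gamma_i}(k_i-k)|=k^{l-2}\prod_i|P_{\Gamma_i}(k_i-k)|$, as claimed. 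As a sanity check, taking $l=2$ with $\Gamma_2=K_n$ (so the total vertex count is $k=k_1+n$) and using $P_{K_n}(x)=(x-n)^{n-1}$ recovers Corollary~\ref{coro:size}.

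I do not anticipate a genuine obstacle here. In contrast with Theorem~\ref{thm:cone}, only the spectrum of $L(\Gamma_J)$ is needed, not the group structure, so the argument is essentially a block-determinant computation once the right invariant decomposition is identified --- and identifying that decomposition is the one conceptual step. The remaining care is purely bookkeeping: verifying the block formula for $L(\Gamma_J)$, and keeping straight the normalization (the factor $1/k$ and the shift $x\mapsto x-(k-k_i)$) that relates the degree-$(k_i-1)$ polynomial $P_{\Gamma_i}$ on $\Div^0(\Gamma_i)\otimes\Q$ to the full Laplacian characteristic polynomial on $\Q^{V(\Gamma_J)}$.
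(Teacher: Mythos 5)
Your proposal is correct and follows essentially the same route as the paper: both rest on writing $L(\Gamma_J)$ as the complete-graph Laplacian $L(K_k)$ plus the block-diagonal correction $\bigoplus_i\bigl(L(\Gamma_i)-L(K_{k_i})\bigr)$, producing the same spectrum on zero-sum vectors ($\mu^i_j+k$ together with $l-1$ copies of $k$) and finishing with the matrix--tree theorem. The only differences are presentational: you work on all of $\Q^{V(\Gamma_J)}$ with the explicit decomposition $\bigl(\bigoplus_i W_i\bigr)\oplus U$ and a small rank-one computation on $U$, then divide off the trivial factor $x$, whereas the paper works directly in $\Div^0_{\Q}(\Gamma_J)$ and notes that $L(K_k)$ acts there as the scalar $k$.
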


Corollary \ref{coro:size} immediately follows from Theorem \ref{thm:bipartite} by taking $\Gamma_1 = \Gamma$ and $\Gamma_2 = K_n$ and noting that $P_{K_n}(x) = (n-x)^{n-1}$.

\section{Notation}
\label{sec:notation}

All graphs are assumed to be non-empty, finite, and connected. Given a graph $\Gamma$, we denote by $V(\Gamma)$ and $E(\Gamma)$, respectively, the vertex and edge set of $\Gamma$. 

We denote by $\Div(\Gamma)$ the free abelian group on $V(\Gamma)$, and refer to $D \in \Div(\Gamma)$ as \defi{divisors} on $\Gamma$. The \defi{degree} map $\deg \colon \Div(\Gamma) \to \Z$ given by 
\[
\sum a_v v \mapsto \sum a_v
\]
is a group homomorphism, and we denote the kernel of this map by $\Div^0(\Gamma)$.

An ordering $V(\Gamma) = \{v_1,\ldots,v_n\}$ of the vertices of $\Gamma$ determines a basis for $\Div(\Gamma)$. We define the \defi{Laplacian operator} $\Delta(\Gamma) \colon \Div(\Gamma) \to \Div^0(\Gamma)$ on a basis via the formula
\[
v \mapsto \left(\deg v\right) v -  \sum_{wv \in E(\Gamma)} w.
\]
Given an ordering of $V(\Gamma)$, we define the \defi{Laplacian matrix} $L(\Gamma)$ to be the matrix of $\Delta(\Gamma)$ with respect to the associated basis; $L(\Gamma)$ is equal to $D(\Gamma) - A(\Gamma)$, where $D(\Gamma)$ and $A(\Gamma)$ are, respectively, the degree and adjacency matrices of $\Gamma$. The matrix $L(\Gamma)$ is symmetric (and in particular diagonalizable with real eigenvalues) and has rank $n-c$, where $c$ is the number of connected components of $\Gamma$. If $\Gamma$ is connected, the kernel of $L(\Gamma)$ is spanned by the vector $(1, \ldots,1)$.

We define the \defi{chip-firing group} $\Pic^0(\Gamma)$ of $\Gamma$ to be $\Div^0(\Gamma) / \im \Delta(\Gamma)$; this is also frequently called the Jacobian of $\Gamma$ and denoted by $\Jac(\Gamma)$ (and also often called the critical group, or the sandpile group). As mentioned above, it has order equal to the number of spanning trees of $\Gamma$, and by Kirchhoff's matrix-tree theorem, this is equal to the product of the non-zero eigenvalues of $L(\Gamma)$ divided by the number of vertices \cite[Corollary 9.10(a)]{stanleyAlgebraicComb}. (Some authors take the dual point of view and define $\calM(\Gamma) \coloneqq \Hom(V(\Gamma),\Z)$ and the Laplacian as an operator $\calM(\Gamma) \to \Div(\Gamma)$; in our arguments, we immediately identify $\calM(\Gamma)$ and $\Div(\Gamma)$ anyway, so we skip directly to our convention.)

\begin{remark}
The name chip-firing group is closely related to \textit{chip-firing games} or \textit{dollar games} \cite{bjornerChipFiring}. 
Given a divisor $D\in \Div(\Gamma)$, one can think of an integer $a_v$ as the number of \textit{dollars} assigned to each vertex $v$. A \textit{chip-firing move} consists of picking a vertex and having it either borrow one dollar from each neighbor or giving one dollar to each of its neighbors. For $D_1,\, D_2\in \Div(\Gamma)$, $D_1-D_2 \in \im \Delta(\Gamma)$ if and only if starting from the configuration $D_1$, one can reach the configuration $D_2$ through a sequence of chip-firing moves (cf.~\cite[Lemma 4.3]{BakerN:RR}). 
This result illustrates the relationship between chip-firing moves and the chip-firing group.
\end{remark}

\begin{remark}
There is an equivalent presentation of $\Pic^0(\Gamma)$ which is useful for computing small examples, see \ref{fig:example}. Choose an orientation for each edge of $\Gamma$. Then the group $\Pic^0(\Gamma)$ is isomorphic to the abelian group generated by the oriented edges, with two types of relations. The loop relations impose that the sum of edges which form an oriented loop is trivial in the group. The vertex relations impose that for each vertex, the sum of incoming edges equals the sum of outgoing edges. 

The isomorphism to $\Pic^0(\Gamma)$ is given by sending an oriented edge to the divisor which is $1$ on the tip of the edge $-1$ on the tail, and $0$ at all other vertices. The loop relations correspond to combinations of edges which are trivial in $\Div^0(\Gamma)$, and each vertex relation corresponds to the chip-firing  at the corresponding vertex. 
We refer the reader to \cite[p.~283]{BoschLR:Neron} for more details.
\end{remark}

Let $\Div^0_{\Q}(\Gamma) \coloneqq \Div^0(\Gamma)\otimes_{\Z}\Q$.  The Laplacian operator $\Delta(\Gamma)$ induces a linear endomorphism of $\Div^0_{\Q}(\Gamma)$ that we denote by $\Delta(\Gamma)_{\Q}$. We let $P_{\Gamma}(x)$  be the characteristic polynomial of $\Delta(\Gamma)_{\Q}$; $P_{\Gamma}(x)$ has integer coefficients and degree $n-1$. We adopt the convention that $P_{\Gamma}(x)=1$ if $\Gamma$ has only one vertex.

\begin{definition}
Given a graph $\Gamma$, we say that a subset $S=\{w_1, \ldots, w_m\}\subset V(\Gamma)$ has the \defi{conformity property} if the induced subgraph on $S$ is either completely disconnected or complete, and for every vertex $x$ outside $S$, $w_ix$ is an edge if and only if $w_jx$ is an edge, for all $i,j$. 
\end{definition}

\subsection{Sketch of proof}
As stated above, $\Pic^0(\Gamma_n)$ has order equal to the determinant of $\Delta(\Gamma_n)$. 
To prove Theorem \ref{thm:cone}, we not only compute this determinant, but we also determine all of the eigenvalues of $\Delta(\Gamma_n)$. More precisely, we first isolate eigenvectors $v_{k+2}-v_{k+1},\ldots , v_{n+k} - v_{k+1}$ of $\Delta(\Gamma_n)$ that come from $K_n$, and then use the conformity property (cf.~Lemmas \ref{elemorder} and \ref{conform}) to prove that these eigenvectors generate a subgroup of $\Pic^0(\Gamma_n)$ isomorphic to $(\mathbb{Z}/(n+k)\mathbb{Z})^{n-1}$. To conclude, we show that the remaining eigenvectors of $\Delta(\Gamma_n)$ come from eigenvectors of $\Delta(\Gamma)$.

\section{Proofs}
\label{sec:main-theorem}

We begin with a pair of computational lemmas that facilitate our proofs.

\begin{lemma}\label{elemorder}
Assume $\Gamma$ is connected with at least $3$ vertices. Suppose $v_1, v_2$ are a pair of vertices of degree $d$ with the conformity property. Let $e_{12}=v_1-v_2$ as an element of $\Pic^0(\Gamma)$. Then if $v_1v_2$ is an edge, $e_{12}$ has order $d+1$, and otherwise $e_{12}$ has order $d$.
\end{lemma}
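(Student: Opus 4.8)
The plan is to work in the edge-relation presentation of $\Pic^0(\Gamma)$ described in the second remark of Section~\ref{sec:notation}, or equivalently to exhibit an explicit divisor $D$ with $m \cdot e_{12} = \Delta(\Gamma) D$ for the candidate order $m$, and then prove minimality. Write $e_{12} = v_1 - v_2 \in \Div^0(\Gamma)$. The conformity property says the vertices outside $\{v_1,v_2\}$ split into a set $N$ of common neighbors (with $|N|$ equal to $d$ or $d-1$ according to whether $v_1v_2 \in E(\Gamma)$) and the rest, and that $v_1,v_2$ are symmetric with respect to all of them. The key computation is $\Delta(\Gamma)(v_1 - v_2) = (\deg v_1) v_1 - (\deg v_2) v_2 - \sum_{w v_1 \in E} w + \sum_{w v_2 \in E} w$; since $\deg v_1 = \deg v_2 = d$ and the neighbor sums agree except possibly for the $v_1v_2$ edge, this collapses: if $v_1 v_2 \notin E$ we get $\Delta(\Gamma)(v_1 - v_2) = d(v_1 - v_2)$, so $e_{12}$ has order dividing $d$; if $v_1v_2 \in E$ we get $\Delta(\Gamma)(v_1 - v_2) = d(v_1 - v_2) - (v_2 - v_1) = (d+1)(v_1 - v_2)$, so $e_{12}$ has order dividing $d+1$. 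This is the easy half, and it already pins the order to a divisor of $d$ (resp. $d+1$).

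The substantive half is showing the order is exactly $m := d$ (resp. $d+1$), i.e. that $e_{12}$ is not killed by any proper divisor of $m$. Suppose $a e_{12} = \Delta(\Gamma) D$ for some $0 < a < m$ and $D = \sum_x c_x x \in \Div(\Gamma)$. I would first normalize $D$ (it is determined up to adding multiples of the all-ones vector, since $\Gamma$ is connected) and then extract constraints by pairing the identity $a(v_1 - v_2) = \Delta(\Gamma) D$ against well-chosen functionals, or by reading off individual coordinates of $\Delta(\Gamma) D$. Comparing the $x$-coordinate for $x \notin \{v_1, v_2\}$ forces a relation among the $c$'s; the symmetry between $v_1$ and $v_2$ under the conformity property should let me conclude $c_{v_1} + c_{v_2}$ and the $c_x$ for common neighbors are tightly constrained, and eventually that $a \equiv 0 \pmod m$. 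Concretely: the quadratic form $D^{T} L(\Gamma) D = a\,(D, v_1 - v_2) = a(c_{v_1} - c_{v_2})$ on one hand, while on the other hand $a(v_1 - v_2) = \Delta(\Gamma) D$ can be solved for $D$ over $\Q$ using the eigen-relation above (the vector $v_1 - v_2$ lies in an explicit eigenspace of $\Delta(\Gamma)_\Q$ with eigenvalue $m$), giving $D \equiv \tfrac{a}{m}(v_1 - v_2) \pmod{\ker L}$; integrality of $D$ then forces $m \mid a$. I expect the cleanest route is this last one: since $\Delta(\Gamma)_\Q$ is invertible on $\Div^0_\Q(\Gamma)$ and $v_1 - v_2$ is an eigenvector with eigenvalue $m$, the unique $\Q$-divisor with $\Delta_\Q D = a(v_1-v_2)$ and $\deg D$ fixed is $(a/m)(v_1-v_2)$ up to $\ker$, and this is integral iff $m \mid a$.

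The main obstacle is the minimality argument, and within it the bookkeeping of the conformity hypothesis: one must use that the induced subgraph on $\{v_1,v_2\}$ together with the common-neighbor structure makes $v_1 - v_2$ a genuine eigenvector (not merely that $\deg v_1 = \deg v_2$), and one must handle the ambiguity of $D$ modulo the all-ones vector carefully so as not to lose or gain a factor. The hypothesis that $\Gamma$ is connected with at least $3$ vertices is used precisely here — it guarantees $\ker L(\Gamma)$ is exactly the span of the all-ones vector (so the eigenspace decomposition behaves) and that there is at least one vertex outside $\{v_1,v_2\}$ to pair against. Once minimality is established, the two cases $v_1v_2 \in E$ and $v_1v_2 \notin E$ are dispatched by the eigenvalue computation already done, completing the proof.
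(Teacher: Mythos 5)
Your proposal is correct and follows essentially the same route as the paper: use conformity to show $v_1-v_2$ is an eigenvector of $\Delta(\Gamma)$ with eigenvalue $d$ or $d+1$ (so the order divides it), then note any integral $D$ with $\Delta(\Gamma)D = a\,e_{12}$ must equal $\tfrac{a}{m}(v_1-v_2)$ plus a multiple of the all-ones vector, and use a third vertex plus integrality to force $m \mid a$. The paper's proof is exactly this argument, written out by comparing coefficients of $\mu D - m(v_1-v_2) = r\sum v_i$.
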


\begin{proof}

Define
\[
 \mu \coloneqq 
 \left\lbrace
\begin{array}{ll}
(d+1)e_{12} & \text{ if } v_1v_2  \text{ is an edge, and}  \\ 
de_{12} & \text{ otherwise}.
\end{array}\right.
\]
By the conformity condition, $\Delta(\Gamma)(v_1 - v_2)= \mu$. It remains to show that no smaller multiple of $e_{12}$ is trivial in $\Pic^0(\Gamma)$. 
Suppose $m$ is an integer such that $me_{12}$ is trivial in $\Pic^0(\Gamma)$. Then there exists $D = \sum a_i v_i \in \Div \Gamma$ such that  $\Delta(\Gamma)(D) = me_{12}$. Since
\[
m\Delta(\Gamma)(v_1 - v_2)=  \mu\Delta(\Gamma)(D)
\] 
in $\Pic^0(\Gamma)$, and since the kernel of $\Delta(\Gamma)_{\Q}$ is spanned by $\sum  v_i$, there exists a rational number $r \in \Q$ such that 
\[
\mu\sum a_i v_i  -  m (v_1 - v_2) = r \sum  v_i.
\]
In particular $\mu a_1=r+m, \mu a_2=r-m$, and $\mu a_i=r$ for $i>2$. But then, since $\mu$, $m$, and each $a_i$ are integers,  $r$ is also an integer. 
Subtracting $r \sum  v_i$ from both sides then gives 
\[
\mu(a_1' v_1 - a_2' v_2)  =  m (v_1 - v_2)
\]
where $a_i' = a_i -r/\mu \in \Z$. But then $\mu a_i' = m$, and we conclude that $\mu$ divides $m$.
\end{proof}

Next, we generalize Lemma \ref{elemorder} to proper subgraphs with the conformity property.

\begin{lemma}\label{conform}
Let $j\geq 1$ and let  $S^1=\{v^1_{1}, \ldots, v^1_{m_1}\}, \ldots, S^j=\{ v^j_{1}, \ldots, v^j_{m_j} \}$ be $j$ mutually disjoint vertex sets, each with the conformity property. Assume $\Gamma$ is connected and the sets $S^1, \ldots, S^j$ do not completely cover $\Gamma$. Then if the elements $e^i_{1k}=v^i_{1}-v^i_{k}$, where $i$ ranges from $1$ to $j$, and $k$ ranges from $2$ to $m_i$, satisfy a relation $\sum_{i=1}^j\sum_{k=2}^{m_i} \alpha_{ij} e^i_{1k}=0$ in $\Pic^0(\Gamma)$, then each $\alpha_{ij}e^i_{1k}=0$ in $\Pic^0(\Gamma)$ .
\end{lemma}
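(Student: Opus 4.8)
The plan is to reduce the general statement to Lemma \ref{elemorder} by working one conformity set at a time and exploiting that the Laplacian $\Delta(\Gamma)_\Q$ acts on each vector $e^i_{1k}$ in an essentially one-dimensional fashion. First I would record the key computational fact: by the conformity property of $S^i$, each vertex $v^i_m$ has the same degree $d_i$ and the same set of neighbors outside $S^i$, so $\Delta(\Gamma)(v^i_1 - v^i_k)$ is supported on $S^i$ and in fact equals $\mu_i e^i_{1k}$, where $\mu_i = d_i+1$ if the induced subgraph on $S^i$ is complete and $\mu_i = d_i$ if it is disconnected (exactly as in the proof of Lemma \ref{elemorder}). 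In particular the span $W$ of all the $e^i_{1k}$ in $\Div^0_\Q(\Gamma)$ is $\Delta(\Gamma)_\Q$-stable, and $\Delta(\Gamma)_\Q$ acts on $W$ diagonally in the obvious basis with eigenvalue $\mu_i$ on the block coming from $S^i$.

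Next, suppose $\sum_{i,k}\alpha_{ik} e^i_{1k} = 0$ in $\Pic^0(\Gamma)$, so there is $D = \sum_v a_v v \in \Div(\Gamma)$ with $\Delta(\Gamma)(D) = \sum_{i,k}\alpha_{ik} e^i_{1k}$. Mimicking the proof of Lemma \ref{elemorder}, I apply the fact that $\ker \Delta(\Gamma)_\Q$ is spanned by $\sum_v v$: the identity $\Delta(\Gamma)(D) = \sum_{i,k}\alpha_{ik} e^i_{1k}$ holds already in $\Div^0(\Gamma)$, not just modulo $\im\Delta(\Gamma)$, because the right side is the actual Laplacian image $\Delta(\Gamma)\bigl(\sum_i \tfrac{1}{\mu_i}\sum_k \alpha_{ik}(v^i_1 - v^i_k)\bigr)$ up to the kernel — so there is a rational $r$ with $\mu_i a_v$ determined on each $v \in S^i$ and $a_x = r/(\text{relevant }\mu)$ at every vertex $x$ not in any $S^i$. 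Since the $S^i$ do not cover $\Gamma$, such a vertex $x$ exists, forcing $r$ to be an integer (here I may need to treat the genuinely multi-set case carefully, using that a vertex outside all the $S^i$ is hit with coefficient $0$ by every $e^i_{1k}$). Subtracting $r\sum_v v$ then reduces $D$ to an integral divisor $D'$ supported on $\bigcup_i S^i$ with $\Delta(\Gamma)(D') = \sum_{i,k}\alpha_{ik} e^i_{1k}$.

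Now I would separate the blocks: restricting the supported-on-$S^i$ equation to the coordinates in $S^i$ and using that $\Delta(\Gamma)(v^i_m)$ restricted to $S^i$ is the Laplacian of the complete-or-empty graph on $S^i$ (a rank-$(m_i-1)$ operator with kernel $\sum_m v^i_m$, eigenvalue $\mu_i$ on the orthogonal complement), I can solve for the $S^i$-part of $D'$ and conclude $\mu_i \mid \alpha_{ik}$-type divisibility within each block — equivalently, $\Delta(\Gamma)$ applied to the $S^i$-part of $D'$ already produces $\sum_k \alpha_{ik} e^i_{1k}$ on the nose, so $\sum_k\alpha_{ik}e^i_{1k} = 0$ in $\Pic^0(\Gamma)$. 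Iterating over $i$ (or noting the blocks are independent) and then invoking Lemma \ref{elemorder} inside a single $S^i$ to peel off individual terms gives each $\alpha_{ik}e^i_{1k} = 0$. The main obstacle I anticipate is bookkeeping: the interaction terms between distinct sets $S^i$ and $S^{i'}$ in $\Delta(\Gamma)(D')$ must be shown to vanish (they do, since a vertex in $S^{i'}$ lies outside $S^i$ and conformity forces $\Delta(\Gamma)(v^i_1 - v^i_k)$ to be $0$ there), and handling the case where the index $j$ is large requires care that the single "escape vertex" argument still pins down $r\in\Z$ uniformly; once the block-diagonalization is set up cleanly, each block is exactly the situation of Lemma \ref{elemorder}.
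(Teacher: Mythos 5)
Your proposal is correct and follows essentially the same route as the paper: write the relation as $\Delta(\Gamma)(D)$ for an integral divisor $D$, recognize the right-hand side as $\Delta(\Gamma)$ of the explicit rational divisor $\sum_{i,k}\tfrac{\alpha_{ik}}{\mu_i}(v^i_1-v^i_k)$, use that $\ker\Delta(\Gamma)_{\Q}$ is spanned by $\sum_v v$ together with a vertex outside $\bigcup_i S^i$ to force $r\in\Z$, and deduce $\mu_i\mid\alpha_{ik}$. The only differences are that your closing block-separation and second appeal to Lemma \ref{elemorder} are superfluous (the paper concludes directly from integrality of the coefficients of $D-r\sum_v v$ at the vertices $v^i_k$), and two small slips --- ``$a_x=r/(\text{relevant }\mu)$'' should simply read $a_x=r$, and the kernel of the restricted block operator is not $\sum_m v^i_m$ in general --- are harmless because neither is load-bearing.
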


The order of $e^i_{1k}$ is specified by the previous lemma (and only depends on $i$).

\begin{proof}
Let $\mu^i_k$ be the order of the element $e^i_{1k}$ in $\Pic^0(\Gamma)$. If $n^i_k$ are integers such that 
\[
 \sum_{i=1}^{j} \sum_{k=2}^{m_i} n^i_k e^i_{1k} = 0 \text{ in } \Pic^0(\Gamma),
\]
then there exists $D \in \Div \Gamma$ such that 
\[
\Delta(\Gamma)(D) =  \sum_{i=1}^{j} \sum_{k=2}^{m_i} n^i_k e^i_{1k}.
\]
But for each $i$ and $k$, Lemma \ref{elemorder} asserts that
\[
\Delta(\Gamma)(v^i_1 - v^i_k) = \mu^i_k e^i_{1k},
\]
and thus
\[
\sum_{i=1}^{j} \sum_{k=2}^{m_i} n^i_k e^i_{1k}  = \Delta(\Gamma)\left(\sum_{i=1}^{j} \sum_{k=2}^{m_i} \frac{n^i_k}{\mu^i_k}(v^i_1-v^i_k)\right).
\]
Since the kernel of $\Delta(\Gamma)$ is spanned by $\sum_{v \in V(\Gamma)} v$, there thus exists a rational number $r$ such that 
\[
D- \sum_{i=1}^{j} \sum_{k=2}^{m_i}\frac{n^i_k}{\mu^i_k}(v^i_1-v^i_k) = r \sum_{v \in V(\Gamma)} v.
\]
Since the vertex sets $S^i$ do not completely cover $\Gamma$, there is at least one vertex $v$ not included among the $v^i_k$; comparing the coefficient of $v$, we conclude that $r$ is an integer (since the coefficient of $v$ in $D$ is an integer). Thus each fraction $\frac{n^i_k}{\mu^i_k}$ must also be an integer, so each $n^i_k$ is divisible by $\mu^i_k$.
\end{proof}

We now prove our main theorems.

\begin{proof}[Proof of Theorem \ref{thm:cone}]

When $k = 1$, $\Gamma_n = K_{n+1}$, and it is well known that $\Pic^0 (K_{n+1})$ is isomorphic to $(\Z / (n-1)\Z)^{n+1}$, in which case we directly observe that the theorem is true.    Thus we may assume that $k\geq 2$.

Consider the matrix $B_n=L(\Gamma_n)-L(K_{n+k})$. Every entry is $0$ except for the upper $k$ by $k$ submatrix, which we denote by $B_0$. Now, $B_0=L(\Gamma)-L(K_{k})$.

Now, note that $B_n$ acts on $\Div^0_{\Q}(\Gamma_n)$. Let $\mathbf{u} \in \Div^0_{\Q}(\Gamma_n)$ be an eigenvector of $B_n$ with eigenvalue $\mu$. As $K_{n+k}$ is a complete graph, the matrix $L(K_{n+k})$ acts as multiplication by $n+k$ on all elements of $\Div^0_{\Q}(\Gamma_n)$. Thus $\mathbf{u}$ is an eigenvector of the operator $\Delta(K_{n+k})$ with eigenvalue $n+k$, so it is an eigenvalue of $\Delta(\Gamma_n)$ with eigenvalue $n+k+\mu$.


Choose $k-1$ eigenvectors $\mathbf{u}_1, \ldots, \mathbf{u}_{k-1} \in \Div^0_{\Q}(\Gamma)$ of $B_0$, with corresponding eigenvalues $\mu_i$. Then, by appending $n$ zeros to each vector, we get eigenvectors of $\Delta(\Gamma_n)$ with eigenvalues $n+k+\mu_i$. On the other hand, $\mathbf{u}_1, \ldots, \mathbf{u}_{k-1}$ are eigenvectors of $\Delta(\Gamma)$ with eigenvalues $k+\mu_i$.

For $i>k+1$, the $n-1$ vectors $\mathbf{u}_i = v_i-v_{k+1}$ are eigenvectors of $\Delta(\Gamma_n)$ with eigenvalue $n+k$. Finally, the vector 
\[
n\sum_{i = 1}^{k}  v_i - k\sum_{i =k+1}^{n+k}  v_i
\] 
is an eigenvector, also with eigenvalue $n+k$. We have given a basis for $\Div^0_{\Q}(\Gamma_n)$ in eigenvectors of $\Delta(\Gamma_n)$.

By the matrix-tree theorem, the order of $\Pic^0(\Gamma_n)$ is the product of these eigenvalues, divided by $n+k$, which is $(n+k)^{n-1}\prod (n+k+\mu_i)$.

Now, the elements $v_{k+1}-v_{k+1+i}$ for $i>0$ generate a subgroup isomorphic to $(\mathbb{Z}/(n+k)\mathbb{Z})^{n-1}$ by Lemma \ref{conform}. The quotient has order $\prod (n+k+\mu_i)$. But the $k+\mu_i$ are the eigenvalues of $\Delta(\Gamma)$ acting on $\Div^0_{\Q}(\Gamma)$, so this expression is equal to $|P_{\Gamma}(-n)|$.
\end{proof}


\begin{proof}[Proof of Theorem \ref{thm:tree}]
Let $v_1, \ldots, v_k$ be the vertices of $\Gamma$. Let $\Gamma_n$ be the join of $\Gamma$ with the complete graph $K_n$ on the vertices $w_1, \ldots, w_n$. The group $H_n$ is the quotient of $\Pic^0(\Gamma_n)$ by relations generated by $w_i-w_j$; in the following, for an element $w \in \Pic^0(\Gamma_n)$ we write $\bar{w}$ for the image of $w$ in $H_n$.
Thus $H_n$ is generated by the elements $\bar{v}_i-\bar{w}_1$. Suppose $v_1, \ldots, v_l$ are leaves of $\Gamma$, and let $H'$ be the subgroup of $H_n$ generated by $\bar{v}_j-\bar{w}_1$, for $1\leq j \leq l$. We will show that $H'=H_n$.

Let $S$ be the set of vertices $v_i$ of $\Gamma$ such that for every vertex $v' \in \Gamma_n$ connected to $v_i$, $\bar{v}_i-\bar{v}' \in H'$. By construction, the vertices $v_1, \ldots, v_l$ are in $S$. 
Now suppose $v$ is a vertex of $\Gamma$ with the following property: at least one neighbor of $v$ in $\Gamma$ is in $S$ and at most one neighbor of $v$ is not in $S$.  Let $\gamma_1 \ldots \gamma_j$ be the neighbors of $v$ in $\Gamma$ which are in $S$. Then $\bar{\gamma_1}-\bar{w}_1$ and $\bar{\gamma_1}-\bar{v}$ are in $H'$, so $\bar{v}-\bar{w}_1$ is as well. Thus $\bar{v}-\bar{w}_i$ is as well, for any $w_i \in K_n$, so we have accounted for every edge of $v$ except possibly one, call the other vertex $\delta$. By firing $v$ we see that $\sum (v-w_i) + \sum (v-\gamma_i) + (v-\delta)$ is trivial in $\Pic^0(\Gamma_n)$. Thus the image is trivial in $H'$, and as every element of this sum is an element of $H'$ except perhaps $v-\delta$, we must have $v-\delta$ is in $H'$. We conclude that $v \in S$.

Let $S^c$ be the complement of $S$ in $\Gamma$. If $S^c$ were nonempty, the induced subgraph $\Gamma_{S^c}$ of $\Gamma$ on $S^c$ would be a tree, so it must have a leaf $v$. The leaves of $\Gamma$ are in $S$, so $v$ is not a leaf of $\Gamma$. So at least one neighbor of $v$ in $\Gamma$ is in $S$ and at most one neighbor is not in $S$. We conclude that $v \in S$, and therefore every vertex of $\Gamma$ is in $S$.
\end{proof}


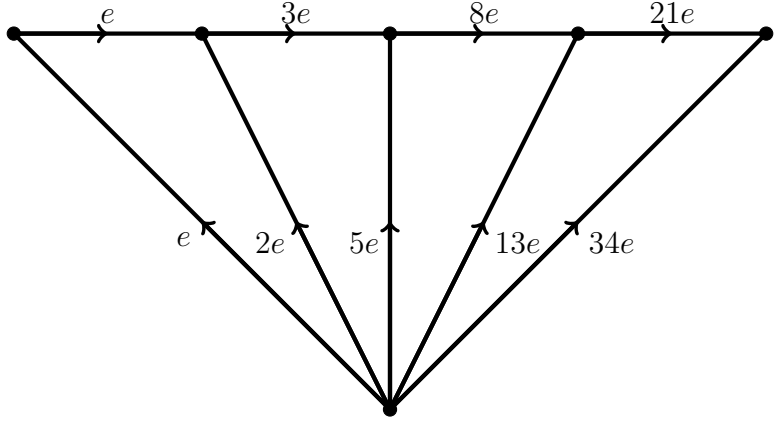
\begin{figure}[h!]
\begin{tikzpicture}[scale=2.5][line join=bevel]
\coordinate (0) at (-2,0);
\coordinate (1) at (-1,0);
\coordinate (2) at (0,0);
\coordinate (3) at (1,0);
\coordinate (4) at (2,0);
\coordinate (5) at (0,-2);
\coordinate (6) at (-1.5,0);
\coordinate (7) at (-1/2,0);
\coordinate (8) at (1/2,0);
\coordinate (9) at (3/2,0);
\coordinate (10) at (-2/2,-2/2);
\coordinate (11) at (-1/2,-2/2);
\coordinate (12) at (0,-2/2);
\coordinate (13) at (1/2,-2/2);
\coordinate (14) at (2/2,-2/2);

\filldraw [black] (0) circle (1pt) 
(1) circle (1pt)
(2) circle (1pt)
(3) circle (1pt)
(4) circle (1pt)
(5) circle (1pt);


\draw [ultra thick] (0)--(1);
\draw [ultra thick] (1)--(2);
\draw [ultra thick] (2)--(3);
\draw [ultra thick] (3)--(4);
\draw [ultra thick] (5)--(0);
\draw [ultra thick] (5)--(1);
\draw [ultra thick] (5)--(2);
\draw [ultra thick] (5)--(3);
\draw [ultra thick] (5)--(4);

\draw [ultra thick, ->] (0)--(6);
\draw [ultra thick, ->] (1)--(7);
\draw [ultra thick, ->] (2)--(8);
\draw [ultra thick, ->] (3)--(9);

\draw [ultra thick, ->] (5)--(10);
\draw [ultra thick, ->] (5)--(11);
\draw [ultra thick, ->] (5)--(12);
\draw [ultra thick, ->] (5)--(13);
\draw [ultra thick, ->] (5)--(14);

\draw (6) node [anchor=south] {$e$};
\draw (7) node [anchor=south] {$3e$};
\draw (8) node [anchor=south] {$8e$};
\draw (9) node [anchor=south] {$21e$};
\draw (10) node [anchor=north east] {$e$};
\draw (11) node [anchor=north east] {$2e$};
\draw (12) node [anchor=north east] {$5e$};
\draw (13) node [anchor=north west] {$13e$};
\draw (14) node [anchor=north west] {$34e$};
\end{tikzpicture}

\caption{Theorem \ref{thm:tree} in action: group relations for the cone $\Gamma_1$ over the path graph $\Gamma$ on $5$ vertices. We orient the edges to give a presentation for $\Pic^0 (\Gamma_1)$, such that each oriented edge represents the function which is $1$ at the tip, $-1$ at the tail, and $0$ at all other vertices. The signed sum of the edges around every loop is trivial in $\Pic^0 (\Gamma_1)$, and the chip-firing relations impose that the signed sum of the edges incident to each vertex is trivial as well. Firing at the cone vertex shows that $55e$ is trivial in the group, and $\Pic^0 (\Gamma_1) \cong \mathbb{Z}/55\mathbb{Z}$.} \label{fig:example}
\end{figure}


\begin{proof}[Proof of Theorem \ref{thm:bipartite}]
Let $B_i$ be the matrix $\Delta(\Gamma_i)-\Delta(K_{k_i})$. Then $B_i$ acts on  $\Div^0_{\Q}(\Gamma_i)$, which admits a basis of eigenvectors $\mathbf{u}^i_{1}, \ldots, \mathbf{u}^i_{k_i-1}$, with eigenvalues $\mu^i_{j}$. These eigenvectors are also eigenvectors of $\Delta(\Gamma_i)$, with eigenvalues $\mu^i_{j}+k_i$.

Now let $B_J$ be the matrix $L(\Gamma_J)-L(K_{k})$. Then $B_J$ is block diagonal, where the $i$th block is a copy of $B_i$. The eigenvalues of $B_J$ acting on $\Div^0_{\Q}(\Gamma_J)$  thus include the eigenvalues of each $B_i$. The remaining eigenvalues are all $0$, since the sum of the rows of any $B_i$ is $0$.

Thus the eigenvalues of $\Delta(\Gamma_J)$ acting on $\Div^0_{\Q}(\Gamma_J)$ are $\mu^i_j+k$, along with $l-1$ copies of $k$. By the matrix-tree theorem, the order of $\Pic^0(\Gamma_J)$ is $k^{l-2}\prod_{j} \prod_{i} (\mu^i_j+k)$, and note that $\prod_{i} (\mu^i_j + k)=|P_{\Gamma_i}(k_i-k)|$.
\end{proof}
\section*{Acknowledgments}
The authors thank the referee for their detailed and thoughtful comments.

\bibliographystyle{plain}
\bibliography{master.bbl}

\end{document}